\numberwithin{equation}{section}
\newtheorem{theorem}{Theorem}[section]
\theoremstyle{remark}
\theoremstyle{definition}
\begin{document}

\title[Parameter-dependent boundary problems in H\"older spaces]{One-dimensional parameter-dependent boundary-value problems in H\"older spaces}


\author[H. Masliuk]{Hanna Masliuk}
\address{National Technical University of Ukraine “Igor Sikorsky Kyiv Polytechnic Institute”, Peremohy Avenue 37, 03056, Kyiv-56, Ukraine}
\email{masliukgo@ukr.net}


\author[V. Soldatov]{Vitalii Soldatov}
\address{Institute of Mathematics, National Academy of Sciences of Ukraine, Tere\-shchen\-kivska Str. 3, 01004 Kyiv-4, Ukraine}
\email{soldatovvo@ukr.net, soldatov@imath.kiev.ua}


\subjclass[2010]{34B08}

\keywords{Differential system, boundary-value problem, continuity in parameter, H\"older space}

\begin{abstract}
We study the most general class of linear boundary-value problems for systems of $r$-th order ordinary differential equations whose solutions range over the complex H\"older space $C^{n+r,\alpha}$, with $0\leq n\in\mathbb{Z}$ and $0<\alpha\leq1$. We prove a constructive criterion under which the solution to an arbitrary parameter-dependent problem from this class is continuous in $C^{n+r,\alpha}$ with respect to the parameter. We also prove a two-sided estimate for the degree of convergence of this solution to the solution of the corresponding nonperturbed problem.
\end{abstract}

\maketitle

\section{Introduction}\label{1}

Parameter-dependent systems of ordinary differential equations often appear in mathematics and its applications. The main question concerning these systems is under which conditions we may pass to the limit in the solution to the corresponding Cauchy problem or a relevant boundary-value problem. As to the Cauchy problem, such conditions were found by Gikhman \cite{Gikhman1952}, Krasnosel'skii and S.~Krein \cite{KrasnoselskiiKrein1955}, Kurzweil and Vorel \cite{KurzweilVorel1957CMJ}. In the case of linear differential systems, more fine conditions were obtained by Levin \cite{Levin1967dan}, Opial \cite{Opial1967}, Reid \cite{Reid1967}, and Nguyen The Hoan \cite{NguenTheHoan1993}.

As compared with the Cauchy problem, parameter-dependent boundary-value problems are less investigated, which is connected with a great diversity of boundary conditions. Nevertheless, there are important results concerning some broad classes of linear boundary-value problems.
The most known among the letter is the class of so-called general linear boundary-value problems for systems of first-order differential equations. Their solutions are supposed to be absolutely continuous on a compact interval $[a,b]$, whereas the boundary condition is given in the form $By=q$ where $B:C([a,b],\mathbb{R}^{m})\to\mathbb{R}^{m}$ is an arbitrary continuous linear operator (here, $y$ is a solution, and $m$ is the number of differential equations in the system). Kiguradze  \cite{Kiguradze1975, Kiguradze1987, Kiguradze2003} and Ashordia \cite{Ashordia1996} found conditions under which the solutions to these problems are continuous in the normed space $C([a,b],\mathbb{R}^{m})$ with respect to the parameter.
Recently \cite{KodliukMikhailetsReva2013, MikhailetsChekhanova2015} these results were refined and extended to complex-valued functions and systems of higher-order differential equations.

Of late years, Mikhailets and his disciples introduced and investigated the broadest classes of linear boundary-value problems for linear differential systems whose solutions range over a chosen complex Sobolev space or space $C^{l}$ of $l$ times continuously differential functions (see   \cite{MikhailetsReva2008DAN9, GnypKodlyukMikhailets2015UMJ, KodliukMikhailets2013JMS} and  \cite{MikhailetsChekhanova2014DAN7, Soldatov2015UMJ} respectively). The boundary conditions for these problems are posed in the form $By=q$ where $B$ is an arbitrary continuous linear operator acting from the chosen function space to the finite-dimensional complex space. These problems are called generic with respect to this space. Mikhailets and his disciples found constructive conditions under which the solutions to the parameter-dependent generic problem are continuous in the chosen space with respect to the parameter. It is turned out that these conditions are not only sufficient but also necessary \cite{GnypMikhailetsMurach2017EJDE, MikhailetsMurachSoldatov2016, MikhailetsMurachSoldatov2016MFAT}. These results were applied to the investigation of multipoint boundary-value problems \cite{Kodliuk2012Dop11}, Green's matrices of boundary-value problems \cite{KodliukMikhailetsReva2013, MikhailetsChekhanova2015}, to the spectral theory of differential operators with distributional coefficients \cite{GoriunovMikhailets2010MN87.2, GoriunovMikhailetsPankrashkin2013EJDE, GoriunovMikhailets2012UMJ63.9}.

In this connection, it is interesting to study generic boundary-value problems with respect to fractional analogs of the Sobolev spaces and the spaces $C^{l}$. The H\"older spaces are such analogs for $C^{l}$. As to systems of first order differential equations, Mikhailets, Murach and Soldatov \cite{MikhailetsMurachSoldatov2016, MurachSoldatov2016Dop10} introduced and investigated generic boundary-value problems with respect to the H\"older space $C^{n+1,\alpha}$, with $0\leq n\in\mathbb{Z}$ and $\alpha\in(0,1]$. These authors found a constructive criterion for the solution of a such parameter-dependent problem to be continuous in the H\"older space in the parameter. They also proved a two-sided estimate for the degree of convergence of this solution to the solution of the corresponding nonperturbed problem. Quite later on, Masliuk \cite{Masliuk2017UMJ} investigated generic boundary-value problems with respect to the H\"older space $C^{n+r,\alpha}$ for systems of $r$-th order differential equations. She found sufficient conditions under which the solutions to these problems are continuous in $C^{n+r,\alpha}$ with respect to the parameter.

The purpose of the present paper is to prove that these conditions are also necessary. The proof is complicated by the absence of an explicit description of the duals of the H\"older spaces (c.f. \cite{GnypMikhailetsMurach2017EJDE, MikhailetsMurachSoldatov2016MFAT}).
Besides, we will obtain a two-sided estimate for the degree of convergence of these solutions.

Note that for the fractional analogs of Sobolev spaces---Slobodetsky spaces---generic boundary-value problems are investigated in \cite{Gnyp2016UMJ, MasliukMikhailets2018UMJ}.

\section{Main results}\label{6sec2}

We arbitrarily choose a compact interval $[a,b]\subset\mathbb{R}$, integers $m\geq1$, $r\geq2$, and $n\geq0$ and a real number $\alpha$ subject to the condition $0<\alpha\leq1$. We use the complex H\"older spaces
$(C^{n,\alpha})^{m}:=C^{n,\alpha}([a,b],\mathbb{C}^{m})$ and $(C^{n,\alpha})^{m\times
m}:=C^{n,\alpha}([a,b],\mathbb{C}^{m\times m})$ of indexes $n$ and $\alpha$. They consist respectively of all vector-valued or matrix-valued functions whose entries belong to the H\"older space $C^{n,\alpha}:=C^{n,\alpha}([a,b],\mathbb{C})$ of scalar functions on $[a,b]$, with the vectors being of $m$ entries and with the matrixes being of $m\times m$ type. Recall that the space $C^{n,\alpha}$ consists, by definition, of all $n$ times continuously differentiable functions $x:[a,b]\to\mathbb{C}$ such that
$$
\|x\|_{n,\alpha}':=\sup_{a\leq t_1<t_2\leq b}
\frac{|x^{(n)}(t_2)-x^{(n)}(t_1)|}{|t_2-t_1|^{\alpha}}<\infty.
$$
This space is endowed with the norm
$$
\|x\|_{n,\alpha}:=\sum_{j=0}^{n}\max_{a\leq t\leq b}|x^{(j)}(t)|+
\|x\|_{n,\alpha}'
$$
and is a Banach algebra with respect to a certain norm which is equivalent to $\|\cdot\|_{n,\alpha}$. The norms in the Banach spaces $(C^{n,\alpha})^{m}$ and $(C^{n,\alpha})^{m\times
m}$ are defined to equal the sums of the norms of all the entries in $C^{n,\alpha}$ and are denoted by $\|\cdot\|_{n,\alpha}$ as well. It will be clear from context to which H\"older space of indexes $n$ and $\alpha$ (scalar, vector or matrix-valued functions) the designation $\|\cdot\|_{n,\alpha}$ relates.

Let a real number $\varepsilon_{0}>0$ be fixed, and let a real parameter $\varepsilon$ range over the interval $[0,\varepsilon_{0})$.
We investigate a parameter-dependent liner boundary-value problem of the form
\begin{gather}\label{1syste}
L(\varepsilon)y(t,\varepsilon)\equiv
y^{(r)}(t,\varepsilon)+\sum_{j=1}^r
A_{r-j}(t,\varepsilon)y^{(r-j)}(t,\varepsilon)=f(t,\varepsilon),
\quad a\leq t\leq b,\\
B(\varepsilon)y(\cdot,\varepsilon)=c(\varepsilon). \label{1kue}
\end{gather}
For every fixed $\varepsilon\in[0,\varepsilon_{0})$, the solution $y(\cdot,\varepsilon)$ to the problem is considered in the class $(C^{n+r,\alpha})^{m}$. We suppose that $A_{r-j}(\cdot, \varepsilon)\in (C^{n,\alpha})^{m\times m}$ for each
$j\in\{1,...,r\}$ and that $f(\cdot,\varepsilon)\in(C^{n,\alpha})^{m}$. Thus, \eqref{1syste} is a system of $m$ scalar linear $r$-th order differential equations given on $[a,b]$. Note we do not assume $A_{r-j}(\cdot,\varepsilon)$ to have any regularity in $\varepsilon$.
As to the boundary condition \eqref{1kue}, we suppose that $B(\varepsilon)$ is an arbitrary continuous linear operator
\begin{equation*}
B(\varepsilon):(C^{n+r,\alpha})^{m}\to \mathbb C^{rm}
\end{equation*}
and that $c(\varepsilon)\in\mathbb C^{rm}$. Naturally, we interpret vectors and vector-valued functions as columns.

The boundary condition \eqref{1kue} is the most general for the system
\eqref{1syste} because the right-hand side $f(\cdot,\varepsilon)$ of the system runs through the whole space $(C^{n,\alpha})^{m}$ if and only if the solution $y(\cdot,\varepsilon)$ to the system runs through the whole space $(C^{n+r,\alpha})^{m}$. We therefore call the boundary-value problem \eqref{1syste}, \eqref{1kue} generic with respect to the space $C^{n+r,\alpha}$.

With this problem, we associate the continuous linear operator
\begin{equation}\label{6.LBe}
(L(\varepsilon),B(\varepsilon)):(C^{n+r,\alpha})^{m}\to
(C^{n,\alpha})^{m}\times\mathbb{C}^{rm}.
\end{equation}
According to \cite[Theorem~1]{Masliuk2017UMJ}, this operator is Fredholm of zero index for every $\varepsilon\in[0,\varepsilon_0)$.

Let us consider the following four

\medskip

\noindent{\bf Limit Conditions} as $\varepsilon\to 0+$:
\begin{itemize}
\item [(I)]$A_{r-j}(\cdot,\varepsilon)\to
A_{r-j}(\cdot,0)$ in $(C^{n,\alpha})^{m\times m}$ for each $j\in\{1,...\,,r\}$;
\item [(II)] $B(\varepsilon)y\to  B(0)y$ in $\mathbb{C}^{rm}$ for every $y\in (C^{n+r,\alpha})^{m}$;
\item [(III)] $f(\cdot,\varepsilon)\to f(\cdot,0)$ in $(C^{n,\alpha})^{m}$;
\item [(IV)] $c(\varepsilon)\to c(0)$ in $\mathbb C^{rm}$.
\end{itemize}

We also consider the so-called

\medskip

\noindent{\bf Condition (0).} The homogeneous boundary-value problem
\begin{equation}\label{6.LB0}
L(0)y(t,0)=0,\quad a\leq t\leq b,\qquad B(0)y(\cdot,0)=0
\end{equation}
has only the trivial solution.

\medskip

Let us give our

\medskip

\noindent{\bf Basic Definition.} We say that the solution to the boundary-value problem \eqref{1syste}, \eqref{1kue} depends continuously on the parameter $\varepsilon$ at $\varepsilon=0$ if the following two conditions are satisfied:
\begin{itemize}
\item [$(\ast)$] There exists a positive number $\varepsilon_{1}<\varepsilon_{0}$ that this problem has a unique solution $y(\cdot,\varepsilon)\in (C^{n+r,\alpha})^{m}$ for arbitrarily chosen $\varepsilon\in[0,\varepsilon_{1})$, $f(\cdot,\varepsilon)\in (C^{n,\alpha})^{m}$, and $c(\varepsilon)\in \mathbb C^{rm}$.
\item [$(\ast\ast)$] It follows from Limit Conditions (III) and (IV) that
\begin{equation} \label{6.gu}
y(\cdot,\varepsilon)\to  y(\cdot,0)\;\;\mbox{in}\;\;
(C^{n+r,\alpha})^{m}\;\;\mbox{as}\;\;\varepsilon\to 0+.
\end{equation}
\end{itemize}

Let us formulate the main result of the paper.

\medskip

\noindent{{\bf Main Theorem.} \it The solution to the boundary-value problem \eqref{1syste}, \eqref{1kue} depends continuously on the parameter $\varepsilon$ at $\varepsilon=0$ if and only if this problem satisfies Condition~\textup{(0)} and Limit Conditions \textup{(I)} and
\textup{(II)}.\rm

\medskip

We supplement this result with a two-sided estimate of the error $\|y(\cdot,0)-y(\cdot,\varepsilon)\|_{n+r,\alpha}$ of the solution $y(\cdot,\varepsilon)$ via
its discrepancy
\begin{equation}
d_{n,\alpha}(\varepsilon):=
\|L(\varepsilon)y(\cdot,0)-f(\cdot,\varepsilon)\|_{n,\alpha}+
|B(\varepsilon)y(\cdot,0)-c(\varepsilon)|.
\end{equation}
Here, recall, $\|\cdot\|_{n,\alpha}$ stands for the norm in the space
$(C^{n,\alpha})^{m}$, whereas $|\cdot|$ denotes the norm in  $\mathbb{C}^{rm}$. Besides, we interpret $y(\cdot,0)$ as an approximate solution to the problem \eqref{1syste}, \eqref{1kue}.

\begin{theorem}\label{th-1}
Let the boundary-value problem \eqref{1syste}, \eqref{1kue} satisfy Condition~\textup{(0)} and Limit Conditions \textup{(I)} and~\textup{(II)}. Then there exist positive numbers $\varepsilon_{2}<\varepsilon_{1}$, $\varkappa_{1}$, and
$\varkappa_{2}$ such that
\begin{equation}\label{6.bound}
\varkappa_{1}\,d_{n,\alpha}(\varepsilon)
\leq\|y(\cdot,0)-y(\cdot,\varepsilon)\|_{n+r,\alpha}\leq
\varkappa_{2}\,d_{n,\alpha}(\varepsilon)
\end{equation}
for every $\varepsilon\in(0,\varepsilon_{2})$. Here, the numbers
$\varepsilon_{2}$, $\varkappa_{1}$, and $\varkappa_{2}$ are independent of the functions $y(\cdot,0)$ and $y(\cdot,\varepsilon)$.
\end{theorem}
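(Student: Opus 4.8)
\noindent\emph{Sketch of proof.} The plan is to extract both inequalities in \eqref{6.bound} from a single operator identity and to reduce the whole theorem to a uniform (in $\varepsilon$) two-sided bound for the operator \eqref{6.LBe} together with its inverse. Abbreviate $T(\varepsilon):=(L(\varepsilon),B(\varepsilon))$, and equip the target space $(C^{n,\alpha})^{m}\times\mathbb{C}^{rm}$ with the norm $\|(u,v)\|:=\|u\|_{n,\alpha}+|v|$, so that $d_{n,\alpha}(\varepsilon)$ is precisely the norm of the pair $\bigl(L(\varepsilon)y(\cdot,0)-f(\cdot,\varepsilon),\,B(\varepsilon)y(\cdot,0)-c(\varepsilon)\bigr)$. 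Since $y(\cdot,\varepsilon)$ solves \eqref{1syste}, \eqref{1kue}, we have $T(\varepsilon)y(\cdot,\varepsilon)=(f(\cdot,\varepsilon),c(\varepsilon))$, and therefore
\[
T(\varepsilon)\bigl(y(\cdot,0)-y(\cdot,\varepsilon)\bigr)
=\bigl(L(\varepsilon)y(\cdot,0)-f(\cdot,\varepsilon),\,B(\varepsilon)y(\cdot,0)-c(\varepsilon)\bigr),
\]
a vector of norm exactly $d_{n,\alpha}(\varepsilon)$. By Condition~(0) together with Limit Conditions (I) and (II), the Main Theorem applies, so the solution depends continuously on the parameter; in particular property $(\ast)$ holds and $T(\varepsilon)$ is invertible for $\varepsilon\in[0,\varepsilon_{1})$. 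Granting, for the moment, constants $C\ge1$ and $\varepsilon_{2}\in(0,\varepsilon_{1})$ with $\|T(\varepsilon)\|\le C$ and $\|T(\varepsilon)^{-1}\|\le C$ for all $\varepsilon\in[0,\varepsilon_{2})$, the estimate \eqref{6.bound} is immediate: applying $T(\varepsilon)^{-1}$ to the identity above gives $\|y(\cdot,0)-y(\cdot,\varepsilon)\|_{n+r,\alpha}\le C\,d_{n,\alpha}(\varepsilon)$, while the identity itself gives $d_{n,\alpha}(\varepsilon)\le\|T(\varepsilon)\|\,\|y(\cdot,0)-y(\cdot,\varepsilon)\|_{n+r,\alpha}\le C\,\|y(\cdot,0)-y(\cdot,\varepsilon)\|_{n+r,\alpha}$. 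Thus one takes $\varkappa_{2}:=C$ and $\varkappa_{1}:=C^{-1}$, constants manifestly independent of $y(\cdot,0)$ and $y(\cdot,\varepsilon)$.

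It remains to produce the two uniform bounds. The bound on $\|T(\varepsilon)\|$ is the routine half: the map $y\mapsto y^{(r)}$ is bounded from $(C^{n+r,\alpha})^{m}$ to $(C^{n,\alpha})^{m}$ with an $\varepsilon$-independent norm, each lower-order term is estimated by $\|A_{r-j}(\cdot,\varepsilon)\|_{n,\alpha}$ because $C^{n,\alpha}$ is a Banach algebra, and Limit Condition (I) keeps these coefficient norms bounded near $\varepsilon=0$; the norms $\|B(\varepsilon)\|$ stay bounded near $0$ by Banach--Steinhaus, since Limit Condition (II) makes $\{B(\varepsilon)\}$ pointwise convergent as $\varepsilon\to0+$ (the subsequence device used below for the inverses applies verbatim to $\{B(\varepsilon)\}$). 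The genuinely delicate half, and the only real obstacle, is the uniform bound $\|T(\varepsilon)^{-1}\|\le C$: here $T(\varepsilon)$ approaches $T(0)$ only strongly (the part $B(\varepsilon)$ need not converge in operator norm), so the openness of the set of invertible operators is unavailable and one cannot control the inverses by a Neumann-series perturbation.

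To overcome this I would again invoke the Main Theorem, now through a second application of Banach--Steinhaus to the family of inverses. Suppose the sought bound fails; then there is a sequence $\varepsilon_{k}\downarrow0$ with $\|T(\varepsilon_{k})^{-1}\|\to\infty$. For an arbitrary but fixed pair $(f,c)\in(C^{n,\alpha})^{m}\times\mathbb{C}^{rm}$, regard it as $\varepsilon$-independent data: the constant families $f(\cdot,\varepsilon)\equiv f$ and $c(\varepsilon)\equiv c$ satisfy Limit Conditions (III) and (IV) trivially, so property $(\ast\ast)$ yields $T(\varepsilon_{k})^{-1}(f,c)\to T(0)^{-1}(f,c)$ in $(C^{n+r,\alpha})^{m}$; being convergent, the sequence $\{T(\varepsilon_{k})^{-1}(f,c)\}_{k}$ is bounded. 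As $(f,c)$ was arbitrary, the family $\{T(\varepsilon_{k})^{-1}\}_{k}$ is pointwise bounded, so Banach--Steinhaus forces $\sup_{k}\|T(\varepsilon_{k})^{-1}\|<\infty$, contradicting $\|T(\varepsilon_{k})^{-1}\|\to\infty$. Hence $\|T(\varepsilon)^{-1}\|$ is bounded on some right neighbourhood $[0,\varepsilon_{2})$ of $0$, and enlarging $C$ if necessary completes the argument. The points demanding care are the legitimacy of feeding constant data into property $(\ast\ast)$ and the bookkeeping of the equivalent product norm that makes the middle quantity equal $d_{n,\alpha}(\varepsilon)$; everything else is soft functional analysis riding on the already-established Main Theorem.
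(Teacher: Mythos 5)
Your proposal is correct and follows essentially the same route as the paper: both inequalities come from the identity $T(\varepsilon)\bigl(y(\cdot,0)-y(\cdot,\varepsilon)\bigr)=\bigl(L(\varepsilon)y(\cdot,0)-f(\cdot,\varepsilon),\,B(\varepsilon)y(\cdot,0)-c(\varepsilon)\bigr)$ combined with uniform bounds on $\|T(\varepsilon)\|$ and $\|T(\varepsilon)^{-1}\|$ obtained via Banach--Steinhaus, with the strong convergence of the inverses extracted from condition $(\ast\ast)$ applied to $\varepsilon$-independent data, exactly as in the paper.
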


Thus, the error and discrepancy of the solution to the problem \eqref{1syste}, \eqref{1kue} are of the same degree as $\varepsilon\to0+$.

In the case of $r=1$, these theorems were proved in  \cite{MikhailetsMurachSoldatov2016} (see also \cite{MurachSoldatov2016Dop10}). Of course, the condition $\varepsilon\to0+$ is not essential in these theorems. We may replace it with the condition $\varepsilon\to\varepsilon_{0}$ provided that $\varepsilon_{0}$ is a limit point of the range of values of the parameter~$\varepsilon$.

\section{Proofs of the main results}\label{6sec3}

\begin{proof}[Proof of Main Theorem]
Masliuk \cite[Theorem~3]{Masliuk2017UMJ} proved the sufficiency of Condition~(0) and Limit Conditions (I) and (II) for the problem \eqref{1syste}, \eqref{1kue} to satisfy Basic Definition. Let us prove their necessity. Suppose that this problem satisfies Basic Definition. Then the problem obviously meets Condition~(0). It remains to prove that the problem satisfies Limit Conditions (I) and (II). We divide our reasoning into three steps.

\emph{Step~$1$.} Let us prove that the problem \eqref{1syste},
\eqref{1kue} satisfies Limit Condition~(I). To this end, we canonically reduce the system \eqref{1syste} to a certain system of first order differential equations.  We put
\begin{gather}\label{def-x}
x(\cdot,\varepsilon):=\mathrm{col}\bigl(y(\cdot,\varepsilon),y'(\cdot, \varepsilon),\ldots,y^{(r-1)}(\cdot, \varepsilon)\bigr)\in(C^{n+1,\alpha})^{rm},\\
g(\cdot,\varepsilon):=\mathrm{col}\bigl(0,f(\cdot,
\varepsilon)\bigr) \in(C^{n,\alpha})^{rm},\notag
\end{gather}
and
$$
A(\cdot,\varepsilon):=\begin{pmatrix}
O_{m} & -I_{m} & O_{m} & \ldots & O_{m} \\
O_{m} & O_{m} & -I_{m} & \ldots & O_{m} \\
\vdots & \vdots & \vdots & \ddots & \vdots \\
O_{m} & O_{m} & O_{m} & \ldots & -I_{m} \\
A_{0}(\cdot,\varepsilon) & A_{1}(\cdot,\varepsilon) & A_{2}(\cdot,\varepsilon) & \ldots & A_{r-1}(\cdot,\varepsilon)\\
\end{pmatrix}\in(C^{n,\alpha})^{rm \times rm}.
$$
Here, $I_m$ (respectively, $O_m$) denotes the identity (resp., zero) $m\times m$ matrix. If $y(\cdot,\varepsilon)\in(C^{n+r,\alpha})^{m}$ is a solution to the system \eqref{1syste}, then $x(\cdot,\varepsilon)$ is a solution to the system
\begin{equation*}
x'(t,\varepsilon)+A(t,\varepsilon)x(t,\varepsilon)=g(t,\varepsilon),
\quad a\leq t\leq b.
\end{equation*}

Consider the following matrix boundary value problem:
\begin{gather}\notag
Y^{(r)}(t,\varepsilon)+\sum_{j=1}^r
A_{r-j}(t,\varepsilon)Y^{(r-j)}(t,\varepsilon)=O_{m\times rm},
\quad a\leq t\leq b,\\
[B(\varepsilon)Y(\cdot,\varepsilon)]=I_{rm}. \label{6.matrix-bound-cond}
\end{gather}
Here,
$$
Y(\cdot,\varepsilon):=
\bigl(y_{j,k}(\cdot,\varepsilon)\bigr)_
{\substack{j=1,\ldots,m\\k=1,\ldots,rm}}
$$
is an unknown $m\times rm$ matrix-valued function with entries from $C^{n+r,\alpha}$. Besides, $O_{m\times rm}$, of course, denotes the zero $m\times rm$ matrix, and
$$
[B(\varepsilon)Y(\cdot,\varepsilon)]:=
\left(B(\varepsilon)
\begin{pmatrix}
y_{1,1}(\cdot,\varepsilon)\\
\vdots \\
y_{m,1}(\cdot,\varepsilon)\\
\end{pmatrix}
\;\ldots\; B(\varepsilon)\begin{pmatrix}
y_{1,rm}(\cdot,\varepsilon)\\
\vdots \\
y_{m,rm}(\cdot,\varepsilon)\\
\end{pmatrix}\right).
$$
This problem is a collection of $rm$ boundary-value problems \eqref{1syste}, \eqref{1kue} whose right-hand sides do not depend of $\varepsilon$. Therefore, this problem has a unique solution $Y(\cdot,\varepsilon)$ for every $\varepsilon\in[0,\varepsilon_{1})$
due to condition ($\ast$) of Basic Definition. Moreover, owing to
condition ($\ast\ast$) of this definition, each
\begin{equation}\label{Y-convergence}
y_{j,k}(\cdot,\varepsilon)\to y_{j,k}(\cdot,0)
\;\;\mbox{in}\;\;C^{n+r,\alpha}\;\;\mbox{as}\;\;\varepsilon\to0+.
\end{equation}

Given $k\in\{1,\ldots,rm\}$ and $\varepsilon\in[0,\varepsilon_{1})$, we define a vector-valued function $x_{k}(\cdot,\varepsilon)\in(C^{n+1,\alpha})^{rm}$ by formula
\eqref{def-x} in which we replace $x(\cdot,\varepsilon)$ with $x_{k}(\cdot,\varepsilon)$ and take
$$
y(\cdot,\varepsilon):=
\mathrm{col}(y_{1,k}(\cdot,\varepsilon),\ldots,
y_{m,k}(\cdot,\varepsilon)).
$$
Let $X(\cdot,\varepsilon)$ denote the matrix-valued function from $(C^{n+1,\alpha})^{rm\times rm}$ such that its $k$-th column is $x_{k}(\cdot,\varepsilon)$ for each $k\in\{1,\ldots,rm\}$. This function satisfies the matrix differential equation
\begin{equation}\label{6.matrix-eq}
X'(t,\varepsilon)+A(t,\varepsilon)X(t,\varepsilon)=O_{rm},
\quad a\leq t\leq b.
\end{equation}
Therefore, $\det X(t,\varepsilon)\neq0$ whenever $t\in[a,b]$, for otherwise the columns of the matrix-valued function $X(\cdot,\varepsilon)$ and, hence, of $Y(\cdot,\varepsilon)$ would be linearly dependent on $[a,b]$, contrary to~\eqref{6.matrix-bound-cond}. Owing to \eqref{Y-convergence}, we have the convergence $X(\cdot,\varepsilon)\to X(\cdot,0)$ in the Banach algebra $(C^{n+1,\alpha})^{rm\times rm}$ as $\varepsilon\to0+$. Hence, $(X(\cdot,\varepsilon))^{-1}\to(X(\cdot,0))^{-1}$ in this algebra. Therefore, in view of \eqref{6.matrix-eq}, we conclude that
\begin{equation*}
A(\cdot,\varepsilon)=-X'(\cdot,\varepsilon)(X(\cdot,\varepsilon))^{-1}\to
-X'(\cdot,0)(X(\cdot,0))^{-1}=A(\cdot,0)
\end{equation*}
in $(C^{n,\alpha})^{rm\times rm}$ as $\varepsilon\to0+$. Thus, the problem \eqref{1syste}, \eqref{1kue} satisfies Limit Condition~(I). Specifically,
\begin{equation}\label{6.bound-norm-A}
\|A_{r-j}(\cdot,\varepsilon)\|_{n,\alpha}=O(1)
\;\;\mbox{as}\;\;\varepsilon\to0+
\;\;\mbox{for each}\;\;j\in\{1,\ldots,r\}.
\end{equation}

\emph{Step~$2$.} Let us prove that
\begin{equation}\label{6.10}
\|B(\varepsilon)\|=O(1)\quad\mbox{as}\quad\varepsilon\to0+.
\end{equation}
Here, $\|\cdot\|$ stands for the norm of a bounded operator from $(C^{n+r,\alpha})^m$ to $\mathbb{C}^{rm}$.

Suppose the contrary; then there exists a sequence
$(\varepsilon^{(k)})_{k=1}^{\infty}\subset(0,\varepsilon_{1})$ such that
\begin{equation}\label{6.11}
\varepsilon^{(k)}\to0\quad\mbox{and}\quad
\|B(\varepsilon^{(k)})\|\to\infty\quad\mbox{as}\quad
k\to\infty,
\end{equation}
with $\|B(\varepsilon^{(k)})\|\neq0$ whenever $k\geq1$. For every integer $k\geq1$, we choose a function
$w_{k}\in(C^{n+r,\alpha})^{m}$ that satisfies the conditions
\begin{equation}\label{6.12}
\|w_{k}\|_{n+r,\alpha}=1\quad\mbox{and}\quad
|B(\varepsilon^{(k)})w_{k}|\geq\frac{1}{2}\,\|B(\varepsilon^{(k)})\|.
\end{equation}
Besides, we put
\begin{gather*}
y(\cdot,\varepsilon^{(k)}):=
\|B(\varepsilon^{(k)})\|^{-1}\,w_{k}\in(C^{n+r,\alpha})^{m},\\
f(\cdot,\varepsilon^{(k)}):=
L(\varepsilon^{(k)})\,y(\cdot,\varepsilon^{(k)})\in(C^{n,\alpha})^{m},\\
c(\varepsilon^{(k)}):=
B(\varepsilon^{(k)})\,y(\cdot,\varepsilon^{(k)})\in\mathbb{C}^{rm}.
\end{gather*}
Owing to \eqref{6.11} and \eqref{6.12}, we have the convergence
\begin{equation}\label{6.z-convergence}
y(\cdot,\varepsilon^{(k)})\to0\;\;\mbox{in}\;\;
(C^{n+r,\alpha})^{m}\;\;\mbox{as}\;\;k\to\infty.
\end{equation}
Hence,
\begin{equation}\label{6.f-convergence}
f(\cdot,\varepsilon^{(k)})\to0\;\;\mbox{in}\;\;
(C^{n,\alpha})^{m}\;\;\mbox{as}\;\;k\to\infty
\end{equation}
because the problem \eqref{1syste}, \eqref{1kue} satisfies Limit Condition~(I) (this was proved on step~1). Besides, according to
\eqref{6.12}, we conclude that
$$
1/2\leq|c(\varepsilon^{(k)})|\leq1\quad
\mbox{whenever}\quad 1\leq k\in\mathbb{Z}.
$$
Hence, there exists a subsequence $(c(\varepsilon^{(k_p)}))_{p=1}^{\infty}$ of $(c(\varepsilon^{(k)}))_{k=1}^{\infty}$ and a nonzero vector
$c(0)\in\nobreak\mathbb{C}^{rm}$ such that
\begin{equation}\label{6.q-convergence}
c(\varepsilon^{(k_p)})\to c(0) \;\;\mbox{in}\;\;\mathbb{C}^{rm}
\;\;\mbox{as}\;\; p\to\infty.
\end{equation}

For every integer $p\geq1$, the function
$y(\cdot,\varepsilon^{(k_p)})\in(C^{n+r,\alpha})^{m}$ is a unique solution to the boundary-value problem
\begin{gather*}
L(\varepsilon^{(k_p)})y(t,\varepsilon^{(k_p)})=f(t,\varepsilon^{(k_p)}), \quad a\leq t\leq b,\\
B(\varepsilon^{(k_p)})y(\cdot,\varepsilon^{(k_p)})=c(\varepsilon^{(k_p)}).
\end{gather*}
Owing to \eqref{6.f-convergence} and \eqref{6.q-convergence} and  condition $(\ast\ast)$ of Basic Definition, we conclude that the function $y(\cdot,\varepsilon^{(k_p)})$
converges to the unique solution $y(\cdot,0)$ of the boundary-value problem
\begin{equation*}
L(0)y(t,0)=0,\quad a\leq t\leq b, \qquad B(0)y(\cdot,0)=c(0),
\end{equation*}
with convergence being in $(C^{n+r,\alpha})^{m}$ as
$k\to\infty$. But $y(\cdot,0)\equiv0$ due to
\eqref{6.z-convergence}. This contradicts the boundary condition
$B(0)y(\cdot,0)=c(0)$, in which $c(0)\neq0$. Thus, our assumption is false, which proves the required property \eqref{6.10}.

\emph{Step~$3$.} Using the results of the previous steps, we will prove here that the problem \eqref{1syste}, \eqref{1kue} satisfies Limit Condition~(II). According to \eqref{6.bound-norm-A} and \eqref{6.10}, there exist numbers $\varkappa'>0$ and $\varepsilon'\in(0,\varepsilon_{1})$ such that
\begin{equation}\label{6.L(e)B(e)-bound}
\|(L(\varepsilon),B(\varepsilon))\|\leq\varkappa'\quad \mbox{for every}\quad\varepsilon\in[0,\varepsilon').
\end{equation}
Here, $\|(L(\varepsilon),B(\varepsilon))\|$ denotes the norm of the bounded operator~\eqref{6.LBe}. We arbitrarily choose a vector-valued function $y\in(C^{n+r,\alpha})^{m}$ and set
$f(\cdot,\varepsilon):=L(\varepsilon)y$ and
$c(\varepsilon):=B(\varepsilon)y$ for every
$\varepsilon\in[0,\varepsilon')$. Hence,
\begin{equation}\label{6.z-equation-L(e)B(e)}
y=(L(\varepsilon),B(\varepsilon))^{-1}
(f(\cdot,\varepsilon),c(\varepsilon))\quad\mbox{for every}\quad
\varepsilon\in[0,\varepsilon').
\end{equation}
Here, $(L(\varepsilon),B(\varepsilon))^{-1}$ denotes the inverse of the operator \eqref{6.LBe}; the latter is invertible due to condition ($\ast$) of Basic Definition.

Using \eqref{6.L(e)B(e)-bound} and \eqref{6.z-equation-L(e)B(e)}, we obtain the following inequalities for every $\varepsilon\in(0,\varepsilon')$:
\begin{align*}
&|B(\varepsilon)y-B(0)y|\leq
\bigl\|(f(\cdot,\varepsilon),c(\varepsilon))-
(f(\cdot,0),c(0))\bigr\|_{(C^{n,\alpha})^{m}\times\mathbb{C}^{rm}}\\
&=\bigl\|(L(\varepsilon),B(\varepsilon))
(L(\varepsilon),B(\varepsilon))^{-1}
\bigl((f(\cdot,\varepsilon),c(\varepsilon))-
(f(\cdot,0),c(0))\bigr)\bigr\|_{(C^{n,\alpha})^{m}\times\mathbb{C}^{rm}}\\
&\leq\varkappa'\,\bigl\|(L(\varepsilon),B(\varepsilon))^{-1}
\bigl((f(\cdot,\varepsilon),c(\varepsilon))-
(f(\cdot,0),c(0))\bigr)\bigr\|_{n+r,\alpha}\\
&=\varkappa'\,\bigl\|(L(0),B(0))^{-1}(f(\cdot,0),c(0))-
(L(\varepsilon),B(\varepsilon))^{-1}(f(\cdot,0),c(0))\bigr\|_{n+r,\alpha}.
\end{align*}
The latter norm vanishes as $\varepsilon\to0+$ according to condition ($\ast\ast$) of Basic Definition. Therefore, $B(\varepsilon)y\to B(0)y$ in $\mathbb{C}^{rm}$ as $\varepsilon\to0+$. Since $y\in(C^{n+r,\alpha})^{m}$ is arbitrary chosen, we conclude that the boundary-value problem \eqref{1syste}, \eqref{1kue} satisfies Limit Condition~(II).
\end{proof}

\begin{proof}[Proof of Theorem $\ref{th-1}$]
Let us prove the left-hand side of the required estimate~\eqref{6.bound}. It follows from Limit Conditions (I) and (II) that the bounded operator \eqref{6.LBe} converges strongly to $(L(0),B(0))$
as $\varepsilon\to0+$. Hence, according to the Banach--Steinhaus theorem,        there exist numbers $\varkappa'>0$ and $\varepsilon'\in(0,\varepsilon_{1})$ that the norm of this operator satisfies \eqref{6.L(e)B(e)-bound}. Therefore,
\begin{align*}
d_{n,\alpha}(\varepsilon)&=
\|L(\varepsilon)(y(\cdot,0)-y(\cdot,\varepsilon))\|_{n,\alpha}+
|B(\varepsilon)(y(\cdot,0)-y(\cdot,\varepsilon))|\\
&\leq\varkappa'\,\|y(\cdot,0)-y(\cdot,\varepsilon)\|_{n+r,\alpha}
\end{align*}
for every $\varepsilon\in(0,\varepsilon')$, which gives the left-hand side of the estimate \eqref{6.bound}.

Let us now prove the right-hand side of this estimate. According to Main Theorem, the boundary-value problem \eqref{1syste}, \eqref{1kue} satisfies Basic Definition. Hence, the operator \eqref{6.LBe} is invertible for every $\varepsilon\in[0,\varepsilon_1)$. Moreover, its inverse  $(L(\varepsilon),B(\varepsilon))^{-1}$ converges strongly to $(L(0),B(0))^{-1}$ as $\varepsilon\to0+$. Indeed, it follows from condition $(\ast\ast)$ of Basic Definition that
\begin{equation*}
(L(\varepsilon),B(\varepsilon))^{-1}(f,c)=:y(\cdot,\varepsilon)\to
y(\cdot,0):=(L(0),B(0))^{-1}(f,c)
\end{equation*}
in $(C^{n+r,\alpha})^{m}$ as $\varepsilon\to0+$ for all $f\in(C^{n,\alpha})^{m}$ and $c\in\mathbb{C}^{rm}$. By Banach--Steinhaus  theorem, there exist numbers $\varkappa_{2}>0$ and $\varepsilon_{2}\in(0,\varepsilon')$ that the norm of the inverse of \eqref{6.LBe} satisfies the condition $\|(L(\varepsilon),B(\varepsilon))^{-1}\|\leq\varkappa_{2}$ whenever $\varepsilon\in[0,\varepsilon_{2})$. Hence,
\begin{align*}
\|y(\cdot,0)-y(\cdot,\varepsilon)\|_{n+r,\alpha}
&=\|(L(\varepsilon),B(\varepsilon))^{-1}(L(\varepsilon),B(\varepsilon))
(y(\cdot,0)-y(\cdot,\varepsilon))\|_{n+r,\alpha}\leq\\
&\leq\varkappa_{2}\,\|(L(\varepsilon),B(\varepsilon))
(y(\cdot,0)-y(\cdot,\varepsilon))\|_
{(C^{n,\alpha})^{m}\times\mathbb{C}^{rm}}=
\varkappa_{2}\,d_{n,\alpha}(\varepsilon)
\end{align*}
for every $\varepsilon\in[0,\varepsilon_{2})$. We have obtained the right-hand side of the required estimate~\eqref{6.bound}.
\end{proof}

\section{Concluding remarks}

The pair of Limit Conditions (I) and (II) is equivalent to the fact that operator \eqref{6.LBe} converges strongly to $(L(0),B(0))$ as $\varepsilon\to0+$. This equivalence follows from Theorem~\ref{th-2} proved below. Therefore, Main Theorem asserts specifically that this strong convergence and the invertibility of the limit operator $(L(0),B(0))$ implies the invertibility of \eqref{6.LBe} whenever $0<\varepsilon\ll1$  and the strong convergence of the inverse of \eqref{6.LBe} to $(L(0),B(0))^{-1}$ as $\varepsilon\to0+$. Note that, for arbitrary continuous operators acting between infinite-dimensional Banach spaces and depending on $\varepsilon$, the analog of this implication is not true.

\begin{theorem}\label{th-2}
Limit Condition $\mathrm{(I)}$ is equivalent to each of the following two conditions:
\begin{itemize}
\item[(c1)] $\|L(\varepsilon)-L(0)\|\to0$ as $\varepsilon\to0+$;
\item[(c2)] $L(\varepsilon)y\to L(0)y$ in $(C^{n,\alpha})^{m}$ as $\varepsilon\to0+$ for every $y\in(C^{n+r,\alpha})^{m}$.
\end{itemize}
Here, $\|\cdot\|$ denotes the norm of the continuous linear operator
\begin{equation*}
L(\varepsilon):(C^{n+r,\alpha})^{m}\to (C^{n,\alpha})^{m}.
\end{equation*}
\end{theorem}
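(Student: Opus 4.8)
The plan is to establish the cycle of implications $\mathrm{(I)}\Rightarrow\mathrm{(c1)}\Rightarrow\mathrm{(c2)}\Rightarrow\mathrm{(I)}$, which yields both asserted equivalences at once. Throughout I would use two ingredients recorded earlier: that $C^{n,\alpha}$ is a Banach algebra under a norm equivalent to $\|\cdot\|_{n,\alpha}$, and the elementary embedding estimate $\|x^{(i)}\|_{n,\alpha}\leq\varkappa\,\|x\|_{n+r,\alpha}$, valid for every $x\in C^{n+r,\alpha}$ and every $i\in\{0,1,\ldots,r-1\}$ with $\varkappa$ independent of $x$ (this holds because an extra derivative controls the H\"older seminorm through the mean value theorem).

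First I would prove $\mathrm{(I)}\Rightarrow\mathrm{(c1)}$. Since $L(\varepsilon)$ and $L(0)$ share the same top-order term $y^{(r)}$, the difference is
\[
(L(\varepsilon)-L(0))y=\sum_{i=0}^{r-1}\bigl(A_{i}(\cdot,\varepsilon)-A_{i}(\cdot,0)\bigr)\,y^{(i)} .
\]
Applying the Banach-algebra inequality entrywise and then the embedding estimate bounds $\|(L(\varepsilon)-L(0))y\|_{n,\alpha}$ by $\varkappa'\bigl(\sum_{i=0}^{r-1}\|A_{i}(\cdot,\varepsilon)-A_{i}(\cdot,0)\|_{n,\alpha}\bigr)\|y\|_{n+r,\alpha}$ with $\varkappa'$ independent of $y$. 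Taking the supremum over the unit ball gives $\|L(\varepsilon)-L(0)\|\leq\varkappa'\sum_{i=0}^{r-1}\|A_{i}(\cdot,\varepsilon)-A_{i}(\cdot,0)\|_{n,\alpha}$, whose right-hand side tends to $0$ by Limit Condition~(I). The implication $\mathrm{(c1)}\Rightarrow\mathrm{(c2)}$ is immediate, since operator-norm convergence forces pointwise convergence on every fixed $y$.

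The crux is $\mathrm{(c2)}\Rightarrow\mathrm{(I)}$, where norm convergence of each coefficient matrix must be recovered from strong convergence of $L(\varepsilon)$. Here I would feed the operator scalar-monomial test functions: for a fixed $v\in\mathbb{C}^m$ and the polynomial $\phi_k(t):=t^k/k!$, set $y=\phi_k v\in(C^{n+r,\alpha})^m$, so that $y^{(i)}=\phi_k^{(i)}v$ and
\[
(L(\varepsilon)-L(0))(\phi_k v)=\sum_{i=0}^{k}\frac{t^{k-i}}{(k-i)!}\,\bigl(A_{i}(\cdot,\varepsilon)-A_{i}(\cdot,0)\bigr)v .
\]
I would argue by induction on $k=0,1,\ldots,r-1$. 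For $k=0$ the sum reduces to $(A_0(\cdot,\varepsilon)-A_0(\cdot,0))v\to0$ in $(C^{n,\alpha})^m$ by (c2); letting $v$ run over the standard basis yields $A_0(\cdot,\varepsilon)\to A_0(\cdot,0)$. In the inductive step every term with $i<k$ is a fixed polynomial times $(A_i(\cdot,\varepsilon)-A_i(\cdot,0))v$, which tends to $0$ by the induction hypothesis and continuity of multiplication by a fixed element of the Banach algebra; subtracting these from the whole sum (which vanishes by (c2)) leaves $(A_k(\cdot,\varepsilon)-A_k(\cdot,0))v\to0$, and running $v$ over the basis gives convergence of $A_k(\cdot,\varepsilon)$. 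After $r$ steps all coefficients are covered, which is exactly Limit Condition~(I).

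The main obstacle I anticipate is precisely this reverse implication: one cannot isolate a single coefficient by a single test function, since prescribing one derivative of $y$ to be a nonzero constant forces the lower-order derivatives to be nonconstant. The peeling induction with the monomials $t^k/k!$ circumvents this by exploiting that differentiation lowers the degree and that multiplication by the fixed residual polynomials is bounded on $C^{n,\alpha}$. The forward direction $\mathrm{(I)}\Rightarrow\mathrm{(c1)}$ is then routine modulo the Banach-algebra estimate and the derivative embedding.
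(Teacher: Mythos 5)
Your proposal is correct and follows essentially the same route as the paper: the forward direction via the Banach-algebra estimate combined with the embedding $\|y^{(i)}\|_{n,\alpha}\leq\varkappa\|y\|_{n+r,\alpha}$, and the reverse direction by feeding in polynomial test functions of increasing degree and peeling off the already-converged lower-order coefficients by induction. The only cosmetic difference is that the paper uses matrix-valued test functions $Z(t)=t^{k+1}I_{m}$ where you use $t^{k}v/k!$ with $v$ ranging over the standard basis, which is the same argument column by column.
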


\begin{proof}
Since $\mathrm{(c1)}\Rightarrow\mathrm{(c2)}$, it remains to prove that  Limit Condition (I) implies condition (c1) and that condition (c2) implies  Limit Condition (I). Let us prove the first of these implications. Choosing $y\in(C^{n+r,\alpha})^{m}$ arbitrarily, we write
\begin{align*}
\|L(\varepsilon)y-L(0)y\|_{n,\alpha}&\leq
\sum_{j=1}^{r}\bigl\|(A_{r-j}(\cdot,\varepsilon)
-A_{r-j}(\cdot,0))\,y^{(r-j)}\bigr\|_{n,\alpha}\\
&\leq c_{1}\sum_{j=1}^{r}
\|A_{r-j}(\cdot,\varepsilon)-A_{r-j}(\cdot,0)\|_{n,\alpha}\,
\|y^{(r-j)}\|_{n,\alpha}\\
&\leq c_{2}\|y\|_{n+r,\alpha}\sum_{j=1}^{r}
\|A_{r-j}(\cdot,\varepsilon)-A_{r-j}(\cdot,0)\|_{n,\alpha}.
\end{align*}
Here, $c_{1}$ and $c_{2}$ are certain positive numbers that do not depend on~$y$. Therefore, if Limit Condition (I) is satisfied, then
\begin{equation*}
\|L(\varepsilon)-L(0)\|\leq c_{2}\sum_{j=1}^{r}
\|A_{r-j}(\cdot,\varepsilon)-A_{r-j}(\cdot,0)\|_{n,\alpha}\to0.
\end{equation*}
In the proof all limits are considered provided that $\varepsilon\to0+$. Thus, Limit Condition (I) implies condition (c1).

Let us now prove that condition (c2) implies  Limit Condition (I). Suppose that condition (c2) is satisfied. Then
\begin{equation}\label{k_m-c}
Z^{(r)}+\sum_{j=1}^{r}A_{r-j}(\cdot,\varepsilon)Z^{(r-j)}\to
Z^{(r)}+\sum_{j=1}^{r}A_{r-j}(\cdot,0)Z^{(r-j)}
\end{equation}
for every matrix-valued function $Z\in(C^{n+r,\alpha})^{m\times m}$. This and the next limits hold true in the space $(C^{n,\alpha})^{m\times m}$. Putting $Z(\cdot)\equiv I_{m}$ in \eqref{k_m-c}, we obtain the convergence $A_{0}(\cdot,\varepsilon)\to A_{0}(\cdot,0)$. Choose an integer $k\in\{0,\ldots,r-2\}$ arbitrarily and assume that $A_l(\cdot,\varepsilon)\to A_l(\cdot,0)$ for each $l\in\{0,\ldots,k\}$. Let us prove that $A_{k+1}(\cdot,\varepsilon)\to A_{k+1}(\cdot,0)$. Putting $Z(t)\equiv t^{k+1}I_m$ in \eqref{k_m-c}, we obtain the convergence
\begin{equation*}
(k+1)!\,A_{k+1}(\cdot,\varepsilon)+
\sum_{l=0}^{k}A_{l}(\cdot,\varepsilon)Z^{(l)}\to
(k+1)!\,A_{k+1}(\cdot,0)+\sum_{l=0}^{k}A_{l}(\cdot,0)Z^{(l)}.
\end{equation*}
Here, by the last assumption,
\begin{equation*}
\sum_{l=0}^{k}A_{l}(\cdot,\varepsilon)Z^{(l)}\to
\sum_{l=0}^{k}A_{l}(\cdot,0)Z^{(l)}.
\end{equation*}
Hence, $A_{k+1}(\cdot,\varepsilon)\to A_{k+1}(\cdot,0)$. We have proved by the induction that Limit Condition~(I) is satisfied.
\end{proof}

\end{document}